\documentclass[12pt]{amsart}

\usepackage[colorlinks]{hyperref}
\hypersetup{
citecolor = {blue}
}
\usepackage{color,graphicx,shortvrb}
\usepackage[latin 1]{inputenc}
\usepackage{amssymb}
\parskip=8pt

\usepackage[active]{srcltx} 

\usepackage{enumerate}

\newtheorem{theorem}{Theorem}[section]

\newtheorem{corollary}[theorem]{Corollary}

\newtheorem{definition}[theorem]{Definition}
\newtheorem{lemma}[theorem]{Lemma}

\newtheorem{proposition}[theorem]{Proposition}
\newtheorem{remark}[theorem]{Remark}

\newtheorem{example}[theorem]{Example}

\newtheorem*{theorem-non}{Theorem}

\def\RR{{\mathbb{R}}}

\def\11{\textbf{$1$}}

\def\io{\int_\Omega }
\def\ido{\int_{\partial \Omega}}
\def\d{\hbox{\rm div\,}}
\def\z{{\bf z}}

\DeclareGraphicsExtensions{.jpg,.pdf,.png,.eps}



\usepackage{enumerate}

\begin{document}

\keywords{nonlinear elliptic equations, 1--Laplacian operator, subcritical source term, $p$--Laplacian operator, mountain pass geometry}
\subjclass[2010]{35J75, 35J20, 35J92}

\title[$1$--Laplacian with subcritical source term]{Elliptic equations involving the $1$--Laplacian and a subcritical source term}

\thanks{The first author is partially supported by  MINECO-FEDER
Grant MTM2015-68210-P (Spain), MINECO Grant BES-2013-066595 (Spain) and Junta de Andaluc\'{\i}a FQM-116 (Spain). The second author is supported by MINECO-FEDER under grant MTM2015--70227--P}

\author[A. Molino Salas and S. Segura de Le\'on]{Alexis Molino Salas and Sergio Segura de Le\'on}

\address{Alexis Molino Salas
\hfill \break\indent Departamento de An\'alisis Matem\'atico, Universidad de Granada,
\hfill\break\indent Avenida Fuentenueva S/N,18071 Granada, Spain} \email{{\tt amolino@ugr.es }}

\address{ Sergio Segura de Le\'on
\hfill\break\indent Departament d'An\`alisi Matem\`atica, Universitat de Val\`encia,
\hfill\break\indent Dr. Moliner 50,
46100 Burjassot, Valencia, Spain} \email{{\tt sergio.segura@uv.es.}}

\date{}

\begin{abstract}
In this paper we deal with a Dirichlet problem for an elliptic equation involving the $1$--Laplacian operator and a source term. We prove that, when the growth of the source is subcritical, there exist two bounded nontrivial solutions to our problem. Moreover,  a Poho\u{z}aev type identity is proved, which holds even when the growth is supercritical. We also show explicit examples of our results.
\end{abstract}

\maketitle

 \thispagestyle{empty}

\section{Introduction}
This paper is concerned to the following Dirichlet problem for the $1$--Laplacian operator and a subcritical source term, whose model problem is
\begin{equation}\label{subcritical}
\left\{
\begin{array}{cc}
-{\rm{div}}\left(\displaystyle \frac{Du}{|Du|} \right)=|u|^{q-1}u,& \hbox{ in } \Omega,
\\
\\
u=0 & \hbox{ on } \partial \Omega,
\end{array}
\right.
\end{equation}
where $\Omega\subset \RR^N$ ($N\geq 2$) is an open bounded set with Lipschitz boundary and $0<q<\frac1{N-1}$. Our aim is to obtain nontrivial solutions (in the sense of Definition \ref{definition}) and study their properties.

We point out that similar problems have many applications and have been studied for a long time. Indeed, the study of steady states of reaction--diffusion equations have systematically been studied since the late 1970s (see \cite{F} and \cite{Ni} for a more recent survey). More precisely, Dirichlet problems with $p$--Laplacian type operator ($p>1$) having a term with a subcritical growth, that is:
\begin{equation}\label{p-subcritical}
\left\{
\begin{array}{lc}
 -\Delta_pu=|u|^{q-1}u, & \hbox{ in } \Omega,
 \\[5mm]
 u=0, & \hbox{ on } \partial \Omega,
 \end{array}
 \right.
 \end{equation}
 with $0<q<p^*-1$ (where $p^*$ stands for the Sobolev conjugate), have extensively been considered in the theory of Partial Differential Equations by using different approaches (for a background we refer to \cite{AA} and \cite{Mawhin}). For instance in \cite{DJM} the authors, by using the well--known ``Mountain Pass Theorem" by Ambrosetti and Rabinowitz \cite{AR}, firstly proved that the trivial solution is a local minimum of the corresponding energy functional and then, since the functional has a mountain pass geometry, they find other critical points (one positive and another one negative), which obviously are solutions to problem \eqref{p-subcritical} . We point out that the proof of the Palais--Smale condition relies on the reflexivity of the energy space $W_0^{1,p}(\Omega)$. Moreover, the restriction $q<p^*-1$ ensures that the imbedding $W_0^{1,p}(\Omega) \hookrightarrow L^q(\Omega)$ is compact, being this fact  essential for the approach used in \cite{DJM}.

 The 1--Laplace operator appearing in \eqref{subcritical} introduces some extra difficulties and special features. We recall that in recent years there have been many works devoted to this operator (we refer to the pioneering works \cite{K, K2, D1, ABCM} and the related papers \cite{ACDM, ACM1, BCN, CT, D2, D3}). One of the main interests for studying the Dirichlet problem for equations involving the 1--Laplacian comes from the variational approach to image restoration (we refer to \cite{ACM} for a review on the first variational models in image processing and their connection with the $1$--Laplacian). This has led to a great amount of papers dealing with problems that involve the 1--Laplacian operator. In spite of this situation, up to our knowledge, this is the first attempt to analyze problem \eqref{subcritical}.

 The natural energy space to study problems involving the 1--Laplacian is the space $BV(\Omega)$ of functions of bounded variation, i.e., those $L^1$--functions such that their distributional gradient is a Radon measure having finite total variation.
 In order to deal with the $1$--Laplacian operator, a first difficulty occurs by defining the quotient $\displaystyle \frac{Du}{|Du|}$, being $Du$ just a Radon measure. It can be overcome through the theory of pairings of $L^\infty$--divergence--measure vector fields and the gradient of a BV--function (see \cite{Anze}). Using this theory, we may consider a vector field $\z\in L^\infty(\Omega;\RR^N)$ such that $\|\z\|_\infty\le 1$ and $(\z, Du)=|Du|$, so that $\z$ plays the role of the above ratio.
  In general, the Dirichlet boundary condition is not achieved in the usual trace form, so that a very weak formulation must be introduced: $[\z, \nu]\in{\rm sign\,}(-u)$, where $[\z,\nu]$ stands for the weak trace on $\partial\Omega$ of the normal component of $\z$.

  We point out that the space $BV(\Omega)$ is not reflexive, so that we cannot follow the arguments of \cite{DJM}. Instead, we apply the results in \cite{DJM} for problem \eqref{p-subcritical} getting nontrivial solutions $w_p$ and then we let $p$ goes to 1. Hence, one of our biggest concerns will be that constants appearing in the proof do not depend on $p$. The other major difficulty we have to overcome  is to check that the limit function $w=\lim_{p\to1}w_p$ is not trivial.

\subsection{Assumptions and main result}

Let us state our problem and assumptions more precisely. We consider the general problem
\begin{equation}\label{problem}
\left\{
\begin{array}{cc}
-\textrm{div}\left(\displaystyle \frac{Du}{|Du|} \right)=f(x,u),& \hbox{ in } \Omega,
\\
\\
u=0, & \hbox{ on } \partial \Omega.
\end{array}
\right.
 \tag{$P$}
\end{equation}
Here, the source term  $f:\Omega \times \RR \to \RR$ is a Carath\'eodory function satisfying the following hypotheses
\begin{enumerate}
\item[(i)] There exists $\alpha >0$ such that
\[
\lim_{s \to 0} \sup \frac{|f(x,s)|}{|s|^\alpha}<\infty, \qquad \hbox{ uniformly in } x\in \Omega.
\]
\item[(ii)] There exist $q\in \left(0, \frac{1}{N-1} \right)$ and $C>0$ such that
\[
|f(x,s)|\leq C \left(1+|s|^q  \right), \qquad x\in \Omega, s\in \RR.
\]
\item[(iii)] There exist $\kappa >1$ and $s_0> 0$ such that
\[
0<\kappa F(x,s)\leq sf(x,s), \qquad x\in \Omega, |s|\geq s_0,
\]
\end{enumerate}
where $F(x,s)=\int_0^s f(x,t)dt$. We deal  with solutions of problem \eqref{problem} in the sense of Definition \ref{definition} (see next section). Our main result is stated as follows:

\begin{theorem}\label{teo1} Under the above assumptions, there exist at least two nontrivial solutions $v,w \in BV(\Omega)\cap L^\infty(\Omega)$ of problem \eqref{problem}. Moreover, $v\leq 0 \leq w$ a.e. $x \in \Omega$.
\end{theorem}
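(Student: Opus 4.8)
The plan is to obtain the two solutions as limits, when $p\to1^+$, of mountain pass solutions of the $p$--Laplacian problem \eqref{p-subcritical} with the \emph{same} source $f$; thus the work consists in applying the results of \cite{DJM} and in controlling every constant so that it does not blow up as $p\to1$. For $p>1$ consider the energy functional
\begin{equation*}
J_p(u)=\frac1p\io|Du|^p\,dx-\io F(x,u)\,dx,\qquad u\in W_0^{1,p}(\Omega).
\end{equation*}
I would first check that, for $p$ sufficiently close to $1$, hypotheses (i)--(iii) guarantee the mountain pass geometry of $J_p$ in the sense of \cite{DJM}: the subcritical condition $q<p^*-1$ holds because $p^*-1\to\frac1{N-1}>q$ as $p\to1$; the origin is a strict local minimum because (i) gives $F(x,s)=o(|s|^p)$ near $0$ once $\alpha>p-1$, which is true for $p$ near $1$ since $\alpha>0$; and the Ambrosetti--Rabinowitz condition (iii) with $\kappa>1$ provides the superlinearity needed (namely $\kappa>p$) together with the Palais--Smale condition via the compact embedding $W_0^{1,p}(\Omega)\hookrightarrow L^{q+1}(\Omega)$. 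Working with the one--sided truncations of $f$ as in \cite{DJM} yields, for each such $p$, a nonnegative solution $w_p$ and a nonpositive solution $v_p$.

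The heart of the matter is then a family of estimates that are uniform in $p$. Choosing a fixed $\phi\in C_c^\infty(\Omega)$ and using that (iii) forces $F(x,s)\ge c_1|s|^\kappa-c_2$, one sees that $J_p(t\phi)\to-\infty$ as $t\to+\infty$ uniformly for $p$ close to $1$, so the mountain pass levels satisfy $c_p\le C$ with $C$ independent of $p$. From the identity $c_p=J_p(w_p)-\frac1\kappa\langle J_p'(w_p),w_p\rangle=\big(\frac1p-\frac1\kappa\big)\io|Dw_p|^p\,dx+\io\big(\frac1\kappa w_pf(x,w_p)-F(x,w_p)\big)dx$ and (iii), the coefficient $\frac1p-\frac1\kappa\to1-\frac1\kappa>0$ controls $\io|Dw_p|^p\,dx\le C$; Hölder's inequality then bounds $\io|Dw_p|\,dx$, hence $\|w_p\|_{BV(\Omega)}$, uniformly. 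A Stampacchia--type truncation argument, carried out so as to keep the constants independent of $p$, provides a uniform bound $\|w_p\|_{L^\infty(\Omega)}\le M$ (and likewise for $v_p$).

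With these bounds I would pass to the limit. Up to a subsequence, $w_p\to w$ in $L^1(\Omega)$ and a.e., $Dw_p\overset{*}{\rightharpoonup}Dw$ as measures, so $w\in BV(\Omega)\cap L^\infty(\Omega)$ with $w\ge0$. Setting $\z_p=|Dw_p|^{p-2}Dw_p$, one has $\io|\z_p|^{p'}\,dx=\io|Dw_p|^p\,dx\le C$, whence for each fixed $r$ the fields $\z_p$ are bounded in $L^r(\Omega;\RR^N)$; a diagonal extraction produces $\z_p\rightharpoonup\z$ with $\|\z\|_\infty\le1$. Since $\|w_p\|_\infty\le M$ and $w_p\to w$ a.e., dominated convergence gives $f(x,w_p)\to f(x,w)$ in $L^1(\Omega)$, so passing to the limit in $-\d\,\z_p=f(x,w_p)$ yields $-\d\,\z=f(x,w)$. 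The structural identity $(\z,Dw)=|Dw|$ and the boundary condition $[\z,\nu]\in{\rm sign}(-w)$ are then recovered through Anzellotti's pairing theory, Green's formula and the lower semicontinuity of the total variation, so that $w$ solves \eqref{problem} in the sense of Definition \ref{definition} (and symmetrically for $v$).

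The main obstacle, as anticipated, is to rule out $w\equiv0$. The decisive ingredient is a uniform \emph{positive} lower bound $c_p\ge\beta_0>0$ for the mountain pass levels, which follows from hypotheses (i)--(ii) and the Sobolev inequality with $p$--independent constants. Suppose, for contradiction, that $w\equiv0$; then $w_p\to0$ in $L^1(\Omega)$, and since $\|w_p\|_\infty\le M$ this convergence is strong in every $L^r(\Omega)$. Using $|w_pf(x,w_p)|\le C(|w_p|+|w_p|^{q+1})$ in the relation $\io|Dw_p|^p\,dx=\io w_pf(x,w_p)\,dx$ we get $\io|Dw_p|^p\,dx\to0$, and likewise $\io F(x,w_p)\,dx\to0$, so $c_p=J_p(w_p)\to0$, contradicting $c_p\ge\beta_0>0$. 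Hence $w\not\equiv0$, and the same reasoning gives $v\not\equiv0$. The delicate points throughout are precisely the $p$--independence of all constants and the positivity of the lower bound $\beta_0$.
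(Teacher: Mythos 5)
Your existence and nontriviality skeleton is essentially the paper's: approximate by $p$--Laplacian mountain pass solutions with the same $f$, bound the mountain pass levels uniformly (you do it by estimating $\max_{t}J_p(t\phi)$ along a fixed ray using the AR--growth of $F$; the paper instead uses the monotonicity in $p$ of $I_p(u)=J_p^+(u)+\frac{p-1}{p}|\Omega|$ together with the nesting $\Gamma_{p_2}\subset\Gamma_{p_1}$ -- both work), derive $\io|\nabla w_p|^p\le C$ from the AR condition, pass to the limit via Anzellotti's theory, and rule out $w\equiv 0$ by a uniform positive lower bound on the levels. Your nontriviality argument is by contradiction ($w\equiv 0$ would force $c_p=J_p(w_p)\to 0$), while the paper argues directly that $I_p(w_p)\ge\rho/2$ and $\lim_{p\to1^+}I_p(w_p)=J(w)$; the decisive ingredient (a $p$--independent mountain pass geometry, obtained from (i)--(ii) and Sobolev embeddings whose constants do not degenerate as $p\to1$, after comparing $\io|\nabla u|^p$ with $\io|\nabla u|$ via Young) is the same, and your version is sound once you note that $w_p\to 0$ in $L^{q+1}(\Omega)$ already follows from the compact embedding $BV(\Omega)\hookrightarrow L^m(\Omega)$, $m<\frac{N}{N-1}$, since $q+1<\frac{N}{N-1}$.

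The genuine gap is the boundedness step, which is part of the statement ($v,w\in L^\infty(\Omega)$). You dismiss it with one sentence: ``a Stampacchia--type truncation argument, carried out so as to keep the constants independent of $p$, provides a uniform bound $\|w_p\|_{L^\infty(\Omega)}\le M$.'' This is precisely the step the paper flags as failing in its classical form: since $q>p-1$ for $p$ close to $1$, the problem is superlinear and the standard Stampacchia iteration does not close. The missing idea is the paper's Lemma \ref{lemilla}: using the uniform gradient bound \eqref{bounded-p} and $q<\frac1{N-1}$, one shows $\int_{A_{k,p}}(1+w_p^q)^N<\varepsilon$ for all $k\ge k_0$ \emph{uniformly in} $p$; only this smallness allows the superlinear right--hand side, estimated by H\"older as $C\mathcal{S}_1\bigl(\int_{A_{k,p}}(1+w_p^q)^N\bigr)^{1/N}\bigl(\io G_k(w_p)^{\frac{N}{N-1}}\bigr)^{\frac{N-1}{N}}$, to be absorbed into the left side. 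Moreover, what this yields is not your claimed uniform bound $\|w_p\|_\infty\le M$, but $\io G_k(w_p)^{\frac{N}{N-1}}\le\bigl(2\mathcal{S}_1(p-1)|\Omega|/p\bigr)^{\frac{N}{N-1}}$ for $k\ge \tilde k_0$, i.e.\ a bound that only degenerates to zero as $p\to1$, whence Fatou gives $\|w\|_\infty\le \tilde k_0$ for the \emph{limit} function; a genuinely uniform $L^\infty$ bound on the $w_p$ themselves is neither proved in the paper nor obviously available. Since you also invoke $\|w_p\|_\infty\le M$ to justify $f(x,w_p)\to f(x,w)$ in $L^1(\Omega)$ and the $L^r$--convergence in the nontriviality argument, you should replace those uses by the dominating function coming from the compact $BV$--embedding (as the paper does), and supply a lemma of the above type for the regularity claim.
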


The proof of existence considers approximating $p$--Laplacian pro\-blems and then the limit as $p\to 1^+$ of their nontrivial solutions $w_p\,$ is taken. To this end, it is essential to achieve the existence of a positive constant $\tilde C$ independent of $p$ such that
\begin{equation}\label{karak}
\|w_p\|_{W_0^{1,1}(\Omega)}\leq \tilde C\,,
\end{equation}
 so that they are uniformly bounded in $W_0^{1,1}(\Omega)$. However, we carefully have to check that their limit is not the trivial solution.

  As far as the regularity of solutions is concerned, we further prove that they are bounded.
To prove the boundedness of the solutions a crucial point is the estimate \eqref{karak}. We would like to highlight that the usual Stampacchia truncation method with $p-$Laplacian problem does not work here since the problem becomes superlineal when $p$ tends to $1$ (i.e. $p-1<q$).

  Finally, in Proposition \ref{Pohozaev} we state  a Poho\u{z}aev type identity for solutions belonging to $W^{1,1}(\Omega)$. The important point to note here is, unlike $p-$Laplacian problems, the existence of solutions for any growth conditions of the source term. This is confirmed by dealing with explicit examples in the ball.

This paper is organized as follows: in the next section on Preliminaries we introduce the space of functions of bounded variation and we give some definitions and properties of Anzellotti's theory. In addition, we raise the problem \eqref{problem} in a variational framework. Section 3 is devoted to the proof of existence and regularity of nontrivial solutions. To finish, in Section 4 a Poho\u{z}aev type identity is obtained. For the sake of completeness, we include there some examples.

\section{Preliminaries}
Throughout this paper, the symbol $\mathcal H^{N-1}(E)$ stands for the $(N - 1)$--dimensional
Hausdorff measure of a set $E\subset\RR^N$ and $|E|$ for its
Lebesgue measure. Moreover, $\Omega\subset \RR^N$ denotes an open bounded set with Lipschitz boundary. Thus, an outward normal unit
  vector $\nu(x)$ is defined for $\mathcal H^{N-1}$--almost every
  $x\in\partial\Omega$.

   We will denote by $W^{1,q}_{0}(\Omega)$ the usual Sobolev space, of measurable functions having weak gradient in $L^{q}(\Omega;\RR^N)$ and zero trace on $\partial \Omega$. Finally, if $1\leq p< N$, we will denote by $\displaystyle p^{*}=Np/(N-p)$ its Sobolev conjugate exponent.
   Furthermore,  $BV(\Omega)$ will denote the space of functions of bounded variation:
\[
BV(\Omega)=\left\{u\in L^1(\Omega)\,:\, Du \hbox{ is a bounded Radon measure   }   \right\}
\]
where $Du:\Omega \to \RR^N$ denotes the distributional gradient of $u$.
In what follows, we denote the distributional gradient by $\nabla u$ if it belongs to $L^1(\Omega;\RR^N)$. We recall that the space $BV(\Omega)$  with norm
\[
\|u\|_{BV(\Omega)}=\io |Du| +\io |u|
\]
is  a Banach space which is non reflexive and non separable.

On the other hand, the notion of a trace on the boundary can be extended to functions $u\in BV(\Omega)$, so that we may write $u\big|_{\partial\Omega}$, through a bounded operator $BV(\Omega)\hookrightarrow L^1(\partial\Omega)$, which is also onto. As a consequence, an equivalent norm on $BV(\Omega)$ can be defined (see \cite{AFP}):
\begin{equation*}
\|u\|=\io |Du| + \ido |u|\, d\mathcal{H}^{N-1},
\end{equation*}
where $\mathcal{H}^{N-1}$ denotes the $(N-1)$--dimensional Hausdorff measure. We will often use this norm in what follows.
In addition, the following continuous embeddings hold
\[
BV(\Omega) \hookrightarrow L^{m}(\Omega)\,,\quad\hbox{for every }1\le m\le\frac{N}{N-1}\,,
\]
which are compact for $1\leq m <\frac{N}{N-1}$.

  Moreover, we will use some functionals  which are lower semicontinuous with respect to the $L^1$--convergence. Besides the BV--norm, we also apply the lower semicontinuity of the functional given by
\[
u \mapsto \int_{\Omega} \varphi \, |Du|,
\]
where $\varphi$ is a nonnegative smooth function. For further properties of functions of bounded variations, we refer to \cite{AFP}

\medskip

Since our concept of solution lies on the Anzellotti theory, we next introduce it.
Consider $X_N(\Omega)=\left\{{\bf z}\in L^\infty(\Omega;\RR^N) \, : \, \d \,{\bf z}\in L^N(\Omega)\right \}$. For $\z\in X_N(\Omega)$ and $u\in BV(\Omega)$ we denote by
$
({\bf z},Du):\mathcal{C}_c^\infty(\Omega)\to \RR
$
the distribution introduced by Anzellotti (\cite{Anze}):
\begin{equation}\label{green-anze}
\left<({\bf z}, Du), \varphi \right>=-\io u\, \varphi \, \d \, {\bf z}-\io u\, {\bf z} \, \nabla \varphi, \quad \forall \, \varphi \in \mathcal{C}_c^\infty(\Omega) \,.
\end{equation}
Moreover, in \cite{Anze} (see also \cite[Corollary C.7, C.16]{ACM}) it is proved that $({\bf z}, Du)$ is a Radon measure with finite total variation and for every Borel $B$ set with $B\subseteq U\subseteq \Omega$ ($U$ open) it holds
\begin{equation}\label{Borel}
\left| \int_B ({\bf z}, Du) \right| \leq \int_B \left| ({\bf z}, Du)  \right | \leq \|\z\|_{L^\infty(U)}\int_B |Du|\,.
\end{equation}
We recall the notion of weak trace on $\partial \Omega$ of the normal component of ${\bf z}$ defined in \cite{Anze} as the application $\left[{\bf z}, \nu \right]:\partial \Omega \to \RR$, being $\nu$ the outer normal unitary vector of $\partial \Omega$, such that $\left[\z, \nu \right]\in L^\infty(\partial \Omega)$ and $\|\left[{\bf z}, \nu \right]\|_{L^\infty(\partial \Omega)} \leq \| \z \|_{L^\infty(\Omega;\RR^N)}$. Furthermore, this definition coincides with the classical one, that is,
\begin{equation}\label{strip}
\left[{\bf z}, \nu \right]=\z \cdot \nu,\quad \hbox{for }\, \z \in \mathcal{C}^1(\overline \Omega_\delta; \RR^N)\,,
\end{equation}
where $\Omega_\delta=\left\{x\in \Omega \,:\, {\rm dist}(x,\partial \Omega)<\delta   \right\}$, for some $\delta>0$ sufficiently small.
In \cite{Anze} a Green formula involving the measure $\left(\z, Du   \right)$ and the weak trace $\left[{\bf z}, \nu \right]$ is established, namely:
\begin{equation}\label{Green-2}
\io \left(\z,Du   \right)+ \io u\, \d \, \z =\ido u \left[\z,\nu  \right]d\mathcal{H}^{N-1}
\end{equation}
being $\z \in X_N(\Omega)$ and $u \in BV(\Omega)$.

  Next, we give the definition of  solution to our problem
\begin{definition}\label{definition}
We say that $u\in BV(\Omega)$ is a solution of problem \eqref{problem} if there exists a vector field $\z\in L^\infty(\Omega;\RR^N)$ with $\|\z\|_\infty\leq 1$ and such that
\begin{enumerate}
\item[(1)] $- { \rm{ div}} \, \z = f(x,u)$ in $\mathcal{D}^\prime (\Omega)$,
\item[(2)] $\left(\z, Du\right)=|Du|$ as measures on $\Omega$,
\item[(3)] $\left[\z, \nu   \right ]\in \rm{sign}(-u)$ on $\partial \Omega$.
\end{enumerate}
\end{definition}

\begin{remark}\rm
We remark that our solution belongs to $BV(\Omega)\subset L^{\frac N{N-1}}(\Omega)$. Thus condition (ii) satisfied by function $f$ leads to
\[
|f(x,u(x))|\le C \left(1+|u(x)|^q  \right)\in L^{\frac N{q(N-1)}}(\Omega)
\]
for certain $1<q<\frac1{N-1}$, wherewith $f(\cdot,u)\in L^{N}(\Omega)$. It follows from (1) in the above definition that $\d\z\in L^{N}(\Omega)$, so that the Anzellotti theory is available.
\end{remark}

\begin{remark}\rm
In principle, condition (1) in Definition \ref{definition} only allows us to take test functions in the space $\mathcal{C}_c^\infty(\Omega)$. We explicitly point out that, as a consequence of the Anzellotti theory, we may choose any $w\in BV(\Omega)$ as a test function. Then, Green's formula \eqref{Green-2} implies
\[
\io (\z, Dw)-\io f(x, u)w=\ido w[\z,\nu]\, d\mathcal H^{N-1}\,.
\]
\end{remark}

Observe that the vector field $\z$ need not be unique. For instance, we may choose $\z = (1,0,\cdots,0)$ or $\z = (0,1,\cdots,0)$ to check that $u\equiv 0$ is solution of \eqref{subcritical}.

\medskip

In order to introduce a variational setting of problem \eqref{problem}  we recall the notion of subdifferential of a convex operator.
\begin{definition}
Let $H:BV(\Omega) \to \RR$ be a convex operator. For every $u\in BV(\Omega)$ we denote by $\partial H(u)$, the subdifferential of $H$ in $u$, as the set
\[
\left\{\xi \in BV(\Omega)^\prime\,:\, H(u)+\xi (v-u)\leq H(v),\hbox{ for all } v\in BV(\Omega)\right\}
\]
\end{definition}
\begin{remark}\rm
Using this definition it is easy to check that $u_0$ is a global minimum of $H$ if and only if $0\in \partial H(u_0)$.
\end{remark}

\begin{lemma}\label{diff}
Given $u\in BV(\Omega)$ and $\z \in L^\infty(\Omega;\RR^N)$ with $\|\z\|_\infty \leq 1$, $\d\z\in L^N(\Omega)$, $\left(\z,Du  \right)=|Du|$ and $\left[\z,\nu  \right]\in \rm{sign}(-u)$ on $\partial \Omega$. Let $\xi_u\>:\>BV(\Omega)\to \RR$ be a linear map defined as
\[
\xi_u(v):=-\io  v\,\d \z \,.
\]
Then, $\xi_u \in \partial \|u\|$.
\end{lemma}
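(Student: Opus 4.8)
The plan is to verify directly the defining inequality of the subdifferential from the Definition above, namely that
\[
\|u\| + \xi_u(v-u) \leq \|v\| \qquad \text{for every } v \in BV(\Omega).
\]
Since $\xi_u$ is linear, this rewrites as $\|u\| - \xi_u(u) + \xi_u(v) \leq \|v\|$, so the whole statement reduces to establishing two facts: first, the equality $\xi_u(u) = \|u\|$, and second, the inequality $\xi_u(v) \leq \|v\|$ for \emph{every} $v \in BV(\Omega)$. Granting these, the displayed inequality is immediate. Before that I would note that $\xi_u$ is indeed an element of $BV(\Omega)^\prime$: it is clearly linear, and since $\d\z \in L^N(\Omega)$ while $BV(\Omega) \hookrightarrow L^{N/(N-1)}(\Omega)$, Hölder's inequality gives $|\xi_u(v)| \leq \|\d\z\|_{L^N(\Omega)}\,\|v\|_{L^{N/(N-1)}(\Omega)} \leq C\,\|v\|$, so $\xi_u$ is bounded. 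Observe also that the hypotheses $\z \in L^\infty(\Omega;\RR^N)$ and $\d\z \in L^N(\Omega)$ mean exactly that $\z \in X_N(\Omega)$, so the Green formula \eqref{Green-2} is available for any $u, v \in BV(\Omega)$.

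For the first fact, I would apply \eqref{Green-2} to the pair $(\z, u)$ to write $\xi_u(u) = -\io u\,\d\z = \io (\z, Du) - \ido u\,[\z,\nu]\, d\mathcal H^{N-1}$. The hypothesis $(\z, Du) = |Du|$ turns the first term into $\io |Du|$. For the boundary term, the condition $[\z,\nu] \in \mathrm{sign}(-u)$ forces $u\,[\z,\nu] = -|u|$ $\mathcal H^{N-1}$--a.e.\ on $\partial\Omega$: where the trace of $u$ is positive one has $[\z,\nu] = -1$, where it is negative one has $[\z,\nu]=1$, and where it vanishes both sides are $0$. Hence $-\ido u\,[\z,\nu]\,d\mathcal H^{N-1} = \ido |u|\,d\mathcal H^{N-1}$, and altogether $\xi_u(u) = \io |Du| + \ido |u|\,d\mathcal H^{N-1} = \|u\|$, as wanted.

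For the second fact, I would again invoke \eqref{Green-2}, now for the pair $(\z,v)$, to get $\xi_u(v) = \io(\z, Dv) - \ido v\,[\z,\nu]\,d\mathcal H^{N-1}$. The first term is controlled using $\|\z\|_\infty \leq 1$ together with \eqref{Borel}, which yields $\io (\z, Dv) \leq \io |(\z, Dv)| \leq \io |Dv|$. For the boundary term I would use the trace bound $\|[\z,\nu]\|_{L^\infty(\partial\Omega)} \leq \|\z\|_{L^\infty(\Omega;\RR^N)} \leq 1$ recalled in the Preliminaries, giving $-\ido v\,[\z,\nu]\,d\mathcal H^{N-1} \leq \ido |v|\,d\mathcal H^{N-1}$. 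Adding the two estimates produces $\xi_u(v) \leq \io |Dv| + \ido |v|\,d\mathcal H^{N-1} = \|v\|$.

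The argument is essentially a bookkeeping of Green's formula, so there is no deep obstacle; the point requiring care is the boundary analysis, where the sign condition $[\z,\nu]\in\mathrm{sign}(-u)$ must be used to extract the \emph{equality} $u[\z,\nu]=-|u|$ for the first fact (handling the null set $\{u=0\}$ on $\partial\Omega$ correctly), whereas for the second fact only the \emph{inequality} coming from $|[\z,\nu]|\le 1$ is needed. Keeping straight which hypotheses give equalities (used at $u$) and which give inequalities (used at a general $v$) is the crux, and it is precisely this asymmetry that makes $\xi_u$ land in the subdifferential rather than merely satisfy a bound.
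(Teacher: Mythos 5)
Your proof is correct and follows essentially the same route as the paper's: both rest on Green's formula \eqref{Green-2}, the equality $(\z,Du)=|Du|$, the identity $u[\z,\nu]=-|u|$ from the sign condition, and the bounds $\|\z\|_\infty\le 1$, $\|[\z,\nu]\|_{L^\infty(\partial\Omega)}\le 1$ for a general $v$. Your split into the two facts $\xi_u(u)=\|u\|$ and $\xi_u(v)\le\|v\|$ is just a reorganization of the paper's single chain of estimates for $\xi_u(v-u)$ (and your H\"older--Sobolev argument for boundedness is an acceptable variant of the paper's Green-formula bound $\|\xi_u\|\le 1$).
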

  \begin{proof}
  Observe that $\xi_u \in BV(\Omega)^\prime$ as a consequence of the Anzellotti theory. Indeed, Green's formula \eqref{Green-2} and $\|\z\|_\infty\le1$ imply
  \begin{multline*}
  |\xi_u(v)|\le\left|\io (\z, Dv)\right|+\left|\ido v \left[\z,\nu  \right]d\mathcal{H}^{N-1}\right|\\
  \le\io |Dv|+\ido |v|\, d\mathcal{H}^{N-1}\,,
  \end{multline*}
  for every $v\in BV(\Omega)$.
  So $\xi_u \in BV(\Omega)^\prime$ and $\|\xi_u\|\le 1$.

  On the other hand, for every $v\in BV(\Omega)$ we obtain
  \begin{align*}
  \xi_u\,(v-u) &= \io - \d \, \z \,(v-u)
  \\
  & =\io \left(\z, D(v-u)   \right)-\ido (v-u)\, \left[\z,\nu  \right]d\mathcal{H}^{N-1}
  \\
  & = \io \left(\z, Dv   \right)-\io |Du|-\ido (v\, \left[\z,\nu  \right]+|u|)d\mathcal{H}^{N-1}
  \\
  & \leq \|\z\|_\infty \io |Dv|-\io |Du|+\|\z\|_\infty \ido |v|d\mathcal{H}^{N-1}- \ido |u|d\mathcal{H}^{N-1}
  \\
  & \leq \|v\|-\|u\|.
  \end{align*}

  \end{proof}

Let $J:BV(\Omega)\to \RR$ be defined as
\[
J(u)=\io |Du|+\ido |u|\,d\mathcal{H}^{N-1}-\io F(x,u).
\]
  We will say that $u_0\in BV(\Omega)$ is a critical point of functional $J$ if there exists $\z \in L^\infty(\Omega;\RR^N)$ with $\|\z\|_\infty \leq 1$ such that
  \begin{align*}
  -\io w\, \d\z=\io f(x,u_0)w,\qquad \hbox{ for all }w \in BV(\Omega),
  \\
  \left(\z, Du_0  \right)= |Du_0| \, \hbox{ in }\Omega \quad \hbox{ and }\quad \left[\z,\nu  \right]\in \rm{sign}(-u_0) \, \hbox{ on }\partial \Omega.
  \end{align*}

 In virtue of Lemma \ref{diff}, the functional given by $\xi(w)=-\io w\, \d\z$ belongs to $\partial\|u_0\|$. We point out that critical points of $J$ coincide with solutions of problem \eqref{problem}.
\medskip

\section{Proof of Theorem 1}

\subsection{Existence of non trivial solutions}

We shall prove that \eqref{problem} has a nontrivial solution $w \geq 0$. A similar argument shows that there exists a nontrivial solution $v\leq 0$.

  Let $\tilde p = \min \left \{1+\alpha, \kappa, q+1   \right \}$. For each $1<p<\tilde p$, consider the problem
\begin{equation}\label{p-laplacian}
\left\{
\begin{array}{cc}
-\textrm{div}\left(|\nabla u|^{p-2}\nabla u \right)=f(x,u),& \hbox{ in } \Omega,
\\
u=0 & \hbox{ on } \partial \Omega.
\end{array}
\right.
\end{equation}
By our hypotheses and the choice of $\tilde p$, the following assertions are true for every $p\in (1,\tilde p)$:
\begin{enumerate}
\item[(a)] $|f(x,s)|\leq C(1+|s|^q)$ with $0<q<p^*-1,$
\item[(b)] $\lim_{s\to 0} \sup \displaystyle \frac{f(x,s)}{|s|^{p-2}s}=0$, uniformly with $x\in \Omega$,
\item[(c)] $0<\kappa F(x,s)\leq sf(x,s)$ for $x\in \Omega$, $|s|\geq s_0$ and $\kappa >p$.
\end{enumerate}
Then, it is well--know that problem \eqref{p-laplacian} has nontrivial solutions $v_p\leq 0 \leq w_p$ (see e.g. \cite{Mawhin}).
These solutions are obtained using the ``Mountain Pass Theorem" by Ambrosetti and Rabinowitz (\cite{AR}) for the two following functionals
$J_p^{\pm}:W_0^{1,p}(\Omega)\to \RR$ given by
\[
J_p^{\pm}(u)=\frac{1}{p}\io |\nabla u|^p-\io F_{\pm}(x,u),
\]
where $F_{\pm}(x,s)=\int_0^s f_{\pm}(x,t){\rm d}t$, being $f_{\pm}:\Omega \times \RR \to \RR$ defined by
\begin{equation*}
f_+(x,s)=
\left \{
\begin{array}{lc}
0 &\, \hbox{ if } s\leq 0,
\\
f(x,s) &\, \hbox{ if } s>0.
\end{array}
\right.
\quad
f_-(x,s)=
\left \{
\begin{array}{lc}
f(x,s) &\, \hbox{ if } s\leq 0,
\\
0 &\, \hbox{ if } s>0.
\end{array}
\right.
\end{equation*}
  Concretely, for the nonnegative solution $w_p$ it is used $J^+_p$ (while $J_p^-$ is used for the nonpositive one $v_p$). Now consider the functional \[I_p(u)=J_p^+(u)+\frac{p-1}{p}\left| \Omega  \right|.\] Since, by Young's inequality
  \[
  \io |\nabla u|^{p_1}\leq \frac{p_1}{p_2}\io |\nabla u|^{p_2}+\frac{p_2-p_1}{p_2}|\Omega|,  \quad 1\leq p_1\leq p_2,
  \]
  it follows that $I_p$ is nondecreasing with respect to $p$. On the other hand, we fix $0<\phi \in \mathcal{C}_c^\infty(\Omega)$ and since $I_p(t\phi)\to -\infty$ as $t \to \infty$, it yields $e=T\phi$  (for some $T>0$)  such that $I_{\tilde p}(e)<0$. Then, by monotonicity, we obtain
  \[
  I_p(e)<0, \quad \hbox{ for all } p\in (1,\tilde p).
  \]

   Moreover, due to the fact that critical points of $J_p^+$ are uniquely determined by critical points of $I_p$, it follows that $u\equiv 0$ is a local minimum of $I_p$ and $w_p\geq 0$ is a nontrivial critical point of $I_p$ which can be obtained invoking to the Mountain Pass Theorem. That is, it satisfies
  \[
  I_p(w_p)=\inf_{\gamma \in \Gamma_p} \max_{t\in [0,1]} I_p(\gamma (t)),
  \]
where
\[
\Gamma_p =\left \{ \gamma \in \mathcal{C}\left([0,1],W_0^{1,p}(\Omega)   \right) \,:\, \gamma(0)=0, \gamma(1)=e  \right\}.
\]
Next we claim that the sequence $\left\{ I_p(w_p) \right\}_{1<p<\tilde p}$ is increasing. Indeed, let $1<p_1<p_2<\tilde p$ and thanks to the monotony of $I_p$ and the fact that $\Gamma_{p_2}\subset \Gamma_{p_1}$ (because $W_0^{1,p_2}(\Omega)\subset W_0^{1,p_1}(\Omega)$), it holds
\begin{align*}
I_{p_1}(w_{p_1}) & =\inf_{\gamma \in \Gamma_{p_1}} \max_{t\in [0,1]} I_{p_1}(\gamma (t))
\\
&\leq \inf_{\gamma \in \Gamma_{p_2}} \max_{t\in [0,1]} I_{p_1}(\gamma (t))
\\
&\leq \inf_{\gamma \in \Gamma_{p_2}} \max_{t\in [0,1]} I_{p_2}(\gamma (t))
\\
&=I_{p_2}(w_{p_2})
\end{align*}
and the claim is proved. Thus, for a fixed $p_0\in (1,\tilde p)$ we get $I_p(w_p)\leq I_{p_0}(w_{p_0})$ for all $p\in (1,p_0)$ and hence
\begin{equation}\label{polo1}
\frac{1}{p}\io |\nabla w_p|^p-\io F(x,w_p) \leq  C, \quad \hbox{ for all } p\in (1,p_0),
\end{equation}
with $C=C(p_0)>0$  independent of $p$. Observe that we write $F(x,w_p)$ instead $F_+(x,w_p)$ because $w_p\geq 0$ (an analogous remark holds for $f_+(x,w_p)$).

  We denote $\Omega_p=\left \{x\in \Omega \, : \, w_p(x)\leq s_0    \right \}$, for any $p\in (1,p_0)$. Then, by  condition (a) and the definition of $F(x,s)$, we obtain
  \begin{equation}\label{F1}
  \int_{\Omega_p}F(x,w_p)\leq Cs_0\left(1+s_0^{q}   \right)|\Omega|=C_1,
  \end{equation}
  where $C_1$ is independent of $p$. Also, by condition (c) and since $w_p$ is a solution, it holds
   \begin{equation}\label{F2}
  \int_{\Omega \setminus \Omega_p}F(x,w_p)\leq \frac{1}{\kappa}\int_{\Omega} w_pf(x,w_p)= \frac{1}{\kappa} \io |\nabla w_p|^p.
  \end{equation}
Substituting \eqref{F1} and \eqref{F2} into \eqref{polo1}, we get
\[
\left(\frac{1}{p_0}-\frac{1}{\kappa}   \right)\io |\nabla w_p|^p\leq
\left(\frac{1}{p}-\frac{1}{\kappa}   \right)\io |\nabla w_p|^p\leq C+C_1\,.
\]
Then, since $\kappa > p_0$, we conclude that
\begin{equation}\label{bounded-p}
\io |\nabla w_p|^p\leq \tilde C,\qquad \forall \, p\in (1,p_0),
\end{equation}
for some positive constant $\tilde C=\tilde C(p_0)$, independent of $p$.


This last inequality \eqref{bounded-p} allows us to establish the following statements (see \cite[Proposition 3]{ABCM}, and also \cite[Theorem 3.3]{MRST}): there exists a bounded vector field  $\z \in L^\infty(\Omega:\RR^N)$ with $\|\z\|_\infty \leq 1$ such that
\begin{equation}
\label{convergence-z}
|\nabla w_p|^{p-2}\nabla w_p \rightharpoonup \z, \, \hbox{ weakly in } L^r(\Omega;\RR^N), \, \hbox{ for all } 1\leq r <\infty,
\end{equation}
as $p\to 1^+$. In particular,
\begin{equation}\label{convergence-gradient-term}
\io |\nabla w_p|^{p-2}\nabla w_p\cdot \nabla \varphi \to \io \z \cdot \nabla \varphi,\, \hbox{ for all } \varphi \in \mathcal{C}_c^1(\Omega).
\end{equation}

On the other hand, \eqref{bounded-p} and Young's inequality imply
$$\|w_p\|\le \int_{\partial\Omega}|w_p|\, d\mathcal H^{N-1}+\frac1p \io |\nabla w_p|^p+\frac{p-1}p|\Omega|\leq \tilde C+|\Omega|\,,$$
so that $\{w_p\}_{p>1}$ is bounded in $BV(\Omega)$.
 It follows that there exists $w\in BV(\Omega)$ such that, up to a subsequence (no relabeled),
\begin{itemize}
\item[(A)] $w_p \to w, \hbox{ in } L^m(\Omega), \, \hbox{ for } 1\leq m <\frac{N}{N-1}$.
\item[(B)] $w_p(x) \to w(x)$, almost everywhere $x\in \Omega$.
\item[(C)] $\exists \, g\in L^m(\Omega)$  ($1\leq m<\frac{N}{N-1}$) such that $|w_p(x)|\leq g(x)$.
\end{itemize}
Observe that $w\geq 0$ because $w_p\geq 0$ for all $p>1$. Then, thanks to (B) and the fact that $f(x,s)$ is a Carath\'eodory function, we obtain
\[
f(x,w_p(x))\to f(x,w(x)), \qquad \hbox{ a.e. } x\in \Omega.
\]
Moreover, we deduce from  (C) that
\[
|f(x,w_p(x))|\leq C(1+|w_p(x)|^q)\leq C(1+g(x)^q)\in L^{N}(\Omega).
\]
Consequently, by the Dominated Convergence Theorem,
\begin{equation}\label{converge-f}
\io f(x,w_p) \varphi \to \io f(x,w)\varphi, \, \qquad  \hbox{ for all } \varphi \in \mathcal{C}_c^1(\Omega).
\end{equation}
Expressions \eqref{convergence-gradient-term} and \eqref{converge-f} imply that
\begin{equation}\label{proof1}
-\d \, \z =f(x,w) \hbox{ in } \mathcal{D}^\prime (\Omega).
\end{equation}
In order to prove that $\left(\z, Dw\right)=|Dw|$, we note that it is enough to show $\left<({\bf z}, Dw), \varphi \right> = \left< |Dw|, \varphi \right>$ for all $0\leq \varphi \in \mathcal{C}_c^1(\Omega)$. Since $\|\z\|_\infty \leq 1$ and \eqref{Borel} holds, we just prove the inequality $\left<({\bf z}, Dw), \varphi \right> \ge \left< |Dw|, \varphi \right>$. Due to the definition of $(\z, Dw)$, we must check that:
\begin{equation}\label{second}
-\io w \, \d \, \z \, \varphi - \io w\, \z \cdot \nabla \varphi \geq \io |Dw|\, \varphi, \, \hbox{ for all } 0\leq \varphi \in \mathcal{C}_c^1(\Omega).
\end{equation}
To this end, taking $0\leq w_p\, \varphi \in W_0^{1,p}(\Omega)$ as a test function in problem \eqref{p-laplacian}, we get
\begin{equation}\label{letting}
\io |\nabla w_p|^p \varphi +\io w_p |\nabla w_p|^{p-2}\nabla w_p\cdot \nabla \varphi = \io f(x,w_p)w_p\, \varphi.
\end{equation}
 We estimate the first integral term in \eqref{letting} using Young's inequality:
 \[
 \io \varphi|\nabla w_p|\le\frac1p \io\varphi|\nabla w_p|^p+\frac{p-1}p\io \varphi\,.
 \]
 Now, from the lower semicontinuity of the involved functional, we obtain
\begin{align*}
\liminf_{p\to 1^+}  \io \varphi |\nabla w_p|^p   &  \geq \liminf_{p\to 1^+}  \io \varphi |\nabla w_p|
 \\
 & =\io \varphi |Dw|\,.
\end{align*}
On the other hand, by (A) and \eqref{convergence-z}
\[
\io w_p |\nabla w_p|^{p-2}\nabla w_p\cdot \nabla \varphi \to \io w \, \z \cdot \nabla\varphi, \quad \hbox{ as } p\to 1^+.
\]

The right hand side of \eqref{letting} is analyzed as follows.  We deduce from
\[
|f(x,w_p)w_p\, \varphi |\leq M C |w_p|(1+|w_p|^q)\leq C_1g(x)(1+g(x)^q)\in L^1(\Omega)
\]
 and the pointwise convergence, that
\[
\io f(x,w_p)w_p\, \varphi \to \io f(x,w)w\, \varphi = -\io \d \, \z \, w \, \varphi.
\]
Then, letting $p \to 1^+$ in \eqref{letting}, we obtain the required inequality \eqref{second} to conclude that
\begin{equation}\label{proof2}
\left(\z, Dw\right)=|Dw|.
\end{equation}
Next, we will show that $\left[\z, \nu   \right ]\in {\rm{sign}}(-w)$ on $\partial \Omega$. It is easy to check that this fact  is equivalent to show
\begin{equation}\label{sergio}
\ido \left(|w|+w \left[\z, \nu   \right ]  \right)d\mathcal{H}^{N-1}=0,
\end{equation}
because $|\left[\z, \nu   \right ]|\leq \| \z \|_\infty \leq 1$. Since
$
-w \left[\z, \nu   \right ] \leq \| \z \|_\infty  |w| \leq  |w|
$
and so
\[
\ido \left(|w|+w \left[\z, \nu   \right ]  \right)d\mathcal{H}^{N-1}\ge 0\,,
\]
it remains to prove the reverse inequality. To do this, we take $w_p-\varphi$, with $\varphi \in \mathcal{C}_c^1(\Omega)$, as a test function in \eqref{p-laplacian}, to obtain
\begin{equation}\label{sergio2}
\io |\nabla w_p|^p =\io |\nabla w_p|^{p-2}\nabla w_p\cdot \nabla \varphi+\io f(x,w_p)(w_p-\varphi).
\end{equation}
Hence, using Young's inequality, we get
\begin{multline*}
p\io |\nabla w_p|  \leq \io |\nabla w_p|^p + (p-1)|\Omega|
\\
 =\io |\nabla w_p|^{p-2}\nabla w_p\cdot \nabla \varphi+ \io f(x,w_p)(w_p-\varphi)+ (p-1)|\Omega|.
\end{multline*}
Now, having in mind \eqref{convergence-z}, the weak lower semicontinuity of the total variation and from the previous arguments, we can pass to the limit as $p\to 1^+$, to have
\begin{align}\label{sergio3}
\nonumber \io |Dw| + \ido |w|d\mathcal{H}^{N-1} & \leq \io \z \cdot \nabla \varphi -\io f(x,w)\varphi+\io f(x,w)w
\\
 &= \io f(x,w)w,
\end{align}
due to \eqref{proof1}. Furthermore, by  \eqref{proof1},
 \eqref{Green-2} and \eqref{proof2}, we get
\begin{align*}
\io f(x,w)w & =-\io w\, \d \, \z
\\
& =-\ido w \left[\z,\nu  \right]d\mathcal{H}^{N-1}+\io \left(\z,Dw   \right)
\\
&=-\ido w \left[\z,\nu  \right]d\mathcal{H}^{N-1} + \io |Dw|\,.
\end{align*}
Replacing this equality in \eqref{sergio3} gives the desired equality in \eqref{sergio} and we conclude that
\begin{equation}\label{proof3}
\left[\z, \nu   \right ]\in {\rm{sign}}(-w) \hbox{ on } \partial \Omega.
\end{equation}
Then, \eqref{proof1}, \eqref{proof2} and \eqref{proof3} lead to conclude that $w$ is a nonnegative solution of problem \eqref{problem} in the sense of Definition \ref{definition}.

  In order to check that $w$ is nontrivial, by hypothesis $(i)$, $f(x,0)=0$ and there exists $\delta>0$, small enough, such that $|f(x,s)|\leq K_1|s|^\alpha$ for all $|s|\in (0,\delta)$ and for some $K_1>0$. Observe that hypothesis $(ii)$ implies $\alpha <q<\frac{1}{N-1}$. Moreover, by definition of $F_+(x,s)$ it follows
  \[
  F_+(x,s)=\int_0^sf_+(x,t)dt\leq \int_0^s |f(x,s)|\leq \frac{K_1}{1+\alpha}|s|^{1+\alpha},
  \]
for $|s|\in (0,\delta)$. Let $\rho \in (0,\delta)$ to be determined. Then, for $u\in BV(\Omega)$ with $\|u\|=\rho$, it holds
\begin{align*}
J(u) &= \|u\|-\io F_+(x,u)
\\
& \geq \|u\|-\frac{K_1}{1+\alpha}\io |u|^{1+\alpha}
\\
& \geq \|u\|-K_2\|u\|^{1+\alpha}
\\
& = \rho (1-K_2\rho^\alpha).
\end{align*}
We define $\rho$, so small, such that $1-K_2\rho^\alpha\geq \frac{1}{2}$, so that
\[
J(u)\geq \frac{\rho}{2},\quad \hbox{ for }\|u\|=\rho>0.
\]
Observing that $J(e)<0$, we deduce that $\|e\|>\rho$.
 Since, by Young's inequality, we get that $I_p(u)\geq J(u)$ for all $u\in W_0^{1,p}$, it follows that
 \begin{equation}\label{tranvia}
  I_p(w_p)=\inf_{\gamma \in \Gamma_p} \max_{t\in [0,1]} I_p(\gamma (t))\geq \frac{\rho}{2}.
  \end{equation}
On the other hand, we have
\begin{align*}
\lim_{p\to 1^+}\frac{1}{p}\io |\nabla w_p|^p &=\lim_{p\to 1^+}\frac{1}{p}\io f(x,w_p)w_p
\\
&=\io f(x,w)w
\\
& = \io (\z, Dw)-\ido w[\z, \nu]\, d\mathcal H^{N-1}
\\
& = \io |Dw|+\ido |w|\,d\mathcal{H}^{N-1},
\end{align*}
where in the last equality we have used that $w$ is a solution of \eqref{problem}. In addition, it is easy to check that
\[
\lim_{p\to 1^+} \io F(x,w_p)=\io F(x,w).
\]
By using these last two equalities, we can assert that
\begin{equation}\label{tiempo}
\lim_{p\to 1^+}I_p(w_p)=J(w).
\end{equation}
Summarizing \eqref{tranvia} and \eqref{tiempo} we conclude that $J(w)\geq \frac{\rho}{2}$ and then $w$ is nontrivial, because $J(0)=0$.
\medskip

With regard to the existence of a nontrivial solution $v\leq 0$ of problem \eqref{problem}, we use the same reasoning applied to the functional
\[
\tilde I_p(u)=\frac{1}{p}\io |\nabla u|^p-\io F_{-}(x,u)+\frac{p-1}{p}\left| \Omega  \right|,
\]
getting that $v_p \to v$ as $p\to 1^+$. Where $v_p$ is the nonpositive solution of $p-$Laplacian problem \eqref{p-laplacian}.


\subsection{Boundedness of the solutions}
In this subsection, we will write $\mathcal{S}_1$ to denote the best constant of the Sobolev embedding $W_0^{1,1}(\Omega)\hookrightarrow L^{\frac{N}{N-1}}(\Omega)$.
Moreover, for every $k\geq 0$ and $0\leq w_p \in W_0^{1,p}(\Omega) $  solution of \eqref{p-laplacian} defined in the proof of Theorem \ref{teo1}, we set
\[
A_k(w_p)=A_{k, p}=\left\{x\in \Omega \,:\, |w_p(x)|>k   \right\}.
\]

\begin{lemma}\label{lemilla}For every $\varepsilon >0$ there exists $k_0>0$ (which does not depend on $p$) such that
\[
\int_{A_{k, p}}(1+w_p^q)^N < \varepsilon
\]
for every $k\geq k_0$ and for all $p>1$ small enough.
\end{lemma}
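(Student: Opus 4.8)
The plan is to reduce the claim to a single uniform-in-$p$ integrability estimate and then exploit the strict subcriticality $q(N-1)<1$, which leaves a genuine margin of integrability. The starting point is the $p$-independent bound \eqref{bounded-p}. Combined with Young's inequality it gives $\io|\nabla w_p|\le\frac1p\io|\nabla w_p|^p+\frac{p-1}p|\Omega|\le\tilde C+|\Omega|$, and then the Sobolev embedding $W_0^{1,1}(\Omega)\hookrightarrow L^{N/(N-1)}(\Omega)$ with constant $\mathcal S_1$ furnishes a constant $M_0:=\mathcal S_1(\tilde C+|\Omega|)$, independent of $p$, such that $\|w_p\|_{L^{N/(N-1)}(\Omega)}\le M_0$ for every $p\in(1,p_0)$. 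This is the only quantitative input I need.

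First I would dispose of the additive constant in $1+w_p^q$. Taking $k\ge1$, on $A_{k,p}$ one has $w_p>k\ge1$, hence $1+w_p^q\le 2w_p^q$ there, so that
\[
\int_{A_{k,p}}(1+w_p^q)^N\le 2^N\int_{A_{k,p}}w_p^{qN}.
\]
Thus it suffices to show that $\int_{A_{k,p}}w_p^{qN}$ is small, uniformly in $p$, once $k$ is large. Next comes the core estimate, H\"older's inequality with exponent $r=\frac1{q(N-1)}$, which is admissible \emph{precisely because} the hypothesis $q<\frac1{N-1}$ forces $r>1$. Since $qN\cdot r=\frac N{N-1}$, writing $r'$ for the conjugate exponent of $r$ I obtain
\[
\int_{A_{k,p}}w_p^{qN}\le\Big(\int_{A_{k,p}}w_p^{\frac N{N-1}}\Big)^{1/r}|A_{k,p}|^{1/r'}\le M_0^{qN}\,|A_{k,p}|^{1/r'},
\]
using $\big(M_0^{N/(N-1)}\big)^{1/r}=M_0^{qN}$ in the last step. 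Finally, Chebyshev's inequality with the same uniform bound controls the level set: $|A_{k,p}|=|\{w_p>k\}|\le k^{-N/(N-1)}\io w_p^{N/(N-1)}\le(M_0/k)^{N/(N-1)}$, which tends to $0$ as $k\to\infty$ uniformly in $p$. Collecting everything yields
\[
\int_{A_{k,p}}(1+w_p^q)^N\le 2^N M_0^{qN}\Big(\frac{M_0}{k}\Big)^{\frac N{N-1}\cdot\frac1{r'}},
\]
and since $\frac1{r'}=1-q(N-1)>0$ the right-hand side is independent of $p$ and vanishes as $k\to\infty$; choosing $k_0\ge1$ large enough gives the conclusion for all $k\ge k_0$ and all $p\in(1,p_0)$.

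The argument is almost entirely bookkeeping, so I do not expect a real obstacle; the one load-bearing point — and the sole reason the estimate is uniform in $p$ — is that the constant $M_0$ does not depend on $p$, which is exactly the content of \eqref{bounded-p}. The strict inequality $q<\frac1{N-1}$ is what makes the H\"older exponent $r$ strictly larger than one and hence supplies the equi-integrability that converts ``$|A_{k,p}|$ small'' into ``$\int_{A_{k,p}}w_p^{qN}$ small''; at the critical value $q(N-1)=1$ this margin would collapse and the scheme would fail, which is consistent with the authors' remark that subcriticality is essential here.
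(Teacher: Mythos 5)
Your proof is correct and follows essentially the same route as the paper's: the uniform bound \eqref{bounded-p} plus the Sobolev embedding $W_0^{1,1}(\Omega)\hookrightarrow L^{N/(N-1)}(\Omega)$ gives a $p$-independent $L^{N/(N-1)}$ bound, and then H\"older with exponent $\frac{1}{q(N-1)}>1$ together with the Chebyshev estimate on $|A_{k,p}|$ yields decay in $k$ precisely because $q<\frac{1}{N-1}$. The only differences are cosmetic bookkeeping (extracting the constant $M_0$ first, using $1+w_p^q\le 2w_p^q$ for $k\ge 1$ instead of the paper's $(1+w_p^q)^N\le 2^{N-1}(1+w_p^{qN})$, and Young's instead of H\"older's inequality to pass from $\int_\Omega|\nabla w_p|^p$ to $\int_\Omega|\nabla w_p|$).
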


\begin{proof}
Using H\"older's inequality twice, Sobolev's inequality  and taking into account that $$|A_{k, p}|\leq \frac{1}{k^{\frac{N}{N-1}}}\int_{A_{k, p}}w_p^{\frac{N}{N-1}},$$
we obtain
\begin{align*}
\int_{A_{k, p}}(1+w_p^q)^N & \leq 2^{N-1}\left( |A_{k, p}|+\int_{A_{k, p}}w_p^{qN}  \right)
\\
& \leq 2^{N-1}\left(|A_{k, p}|+\left(\int_{A_{k, p}}w_p^{\frac{N}{N-1}}   \right)^{q(N-1)}|A_{k, p}|^{1-q(N-1)}           \right)
\\
& \leq \frac{2^{N-1}(1+k^{qN})}{k^{\frac{N}{N-1}}}\int_{\Omega}w_p^{\frac{N}{N-1}}
\\
& \leq \frac{2^{N-1}(1+k^{qN})}{k^{\frac{N}{N-1}}}\mathcal{S}_1^{\frac{N}{N-1}}\left(\io |\nabla w_p|   \right)^{\frac{N}{N-1}}
\\
& \leq  \frac{2^{N-1}(1+k^{qN})}{k^{\frac{N}{N-1}}}\mathcal{S}_1^{\frac{N}{N-1}}\left(\io |\nabla w_p|^p   \right)^{\frac{N}{p(N-1)}}|\Omega|^{\frac{p-1}{p}\frac{N}{N-1}},
\end{align*}
now, having in mind inequality \eqref{bounded-p} which asserts the existence of a positive constant $\tilde C$, which does not depend on $p$, satisfying
\[
\left(\io |\nabla w_p|^p   \right)^{\frac{1}{p}}\leq \tilde C^{1/p}< 1+\tilde C
\]
and since $|\Omega|^{\frac{p-1}{p}}<1+|\Omega|$, it follows that there exists a positive constant $C=C(N,q,\mathcal{S}_1,|\Omega|)$ such that
\[
\int_{A_{k, p}}(1+w_p^q)^N < \frac{C(1+k^{qN})}{k^{\frac{N}{N-1}}}\to 0
\]
as $k \to \infty$, because $q<\frac{1}{N-1}$.
\end{proof}
\begin{remark}By a similar argument we can state the existence of a $k_0>0$ (which does not depend on $p$) such that
\[
\int_{A_{k, p}}(1+|v_p|^q)^N < \varepsilon
\]
for every $k\geq k_0$ and for all $p>1$ sufficiently small. Where $0\geq v_p\in W_0^{1,p}(\Omega)$ is the negative solution of \eqref{p-laplacian} and $A_{k, p}=A_k(v_p)$.
\end{remark}
Now, we are ready to prove the boundedness of the solutions $v$ and $w$ of problem \eqref{problem}.
\begin{proof}[Proof of Boundedness]
We prove the boundedness of the positive solution $w$. The proof for the negative one is similar in spirit.

 For every $k>0$, we define the auxiliary function $G_k:\RR \to \RR$ as usual
 \begin{equation*}
 G_k(s)= \left\{
 \begin{array}{lc}
 s-k, & s>k,
 \\
 0, & |s|\leq k,
 \\
 s+k,& s<-k.
 \end{array}
 \right.
 \end{equation*}
 Then, choosing $G_k(w_p)$ as a test function in \eqref{p-laplacian}, we get
 \begin{equation}\label{moll1}
 \io |\nabla G_k(w_p)|^p=\io f(x,w_p)G_k(w_p).
 \end{equation}
 Now, computing and using \eqref{moll1}, Sobolev's embedding, and the Young and H\"older inequalities, we have
 \begin{align*}
 \left(\io G_k(w_p)^{\frac{N}{N-1}}   \right)^\frac{N-1}{N} & \leq \mathcal{S}_1 \io |\nabla G_k(w_p)|
 \\
 &\leq \frac{\mathcal{S}_1}{p}\io |\nabla G_k(w_p)|^p+\frac{\mathcal{S}_1(p-1)}{p}|\Omega|
 \\
 &\leq \mathcal{S}_1 \io |f(x,w_p|G_k(w_p)+\frac{\mathcal{S}_1(p-1)}{p}|\Omega|
 \\
 & \leq  C \mathcal{S}_1 \int_{A_k}(1+w_p^q)G_k(w_p)+\frac{\mathcal{S}_1(p-1)}{p}|\Omega|
 \\
 & \leq   C \mathcal{S}_1 \left(\int_{A_{k, p}}(1+w_p^q)^N  \right)^{\frac{1}{N}}\left(\int_{\Omega}G_k(w_p)^{\frac{N}{N-1}}   \right)^{\frac{N-1}{N}}
 \\
 &\hspace{6,3cm} +\frac{\mathcal{S}_1(p-1)}{p}|\Omega|.
 \end{align*}
 By Lemma \ref{lemilla}, there exists $\tilde k_0>0$ (which does not depend on $p$) such that
 \[
 \int_{A_{k, p}}(1+w_p^q)^N<\frac{1}{(2C\mathcal{S}_1)^N}, \hbox{ for all } k\geq \tilde k_0,
 \]
 and for all $p>1$ sufficiently small. Consequently, we obtain
 \[
 \int_{\Omega}G_k(w_p)^{\frac{N}{N-1}}\leq \left(\frac{2\mathcal{S}_1(p-1)|\Omega|}{p}\right)^{\frac{N}{N-1}}.
 \]
 Since $w_p(x)\to w(x)$ a.e. $x\in \Omega$, by Fatou lemma, we can pass to the limit on $p \to 1$, to conclude that
 \[
 \int_{\Omega}(w(x)-k)^{\frac{N}{N-1}}=0, \hbox{ for every } k\geq \tilde k_0.
 \]
 Thus, $\|w\|_\infty \leq \tilde k_0$.
\end{proof}


\section{A Poho\u{z}aev type identity and explicit examples}
In this section we provide a Poho\u{z}aev type identity for elliptic problems involving the $1$--Laplacian operator
\begin{equation}\label{sense}
\left\{
\begin{array}{cc}
-\textrm{div}\left(\displaystyle \frac{Du}{|Du|} \right)=f(u),& \hbox{ in } \Omega,
\\
\\
u=0 & \hbox{ on } \partial \Omega.
\end{array}
\right.
\end{equation}
From now on, for any function $g$ evaluated on $\partial \Omega$, we write $\ido g$ instead of $\ido g \,d\mathcal{H}^{N-1}$ when no confusion can arise.
\begin{proposition}{{\bf{[Poho\u{z}aev type identity for the $1$--Laplacian]}}}\label{Pohozaev} Let $u\in W^{1,1}(\Omega)$ be a solution of problem \eqref{sense}  in the sense of Definition \ref{definition} with $\z \in \mathcal{C}^1(\overline \Omega_\delta)$ (for some $\delta>0$ sufficiently small) and assume that $x \cdot \nabla u\in W^{1,1}(\Omega)$. Then, $u$ satisfies the identity
\begin{align}\label{pacosaez}
(N-1)\io uf(u)-N\io F(u)&+\ido F(u) x\cdot \nu
\\
\nonumber =\ido |\nabla u| x\cdot \nu -\ido \left(x \cdot \nabla u\right)&\left(\z \cdot \nu \right)+(N-1)\ido |u| \,.
\end{align}
\end{proposition}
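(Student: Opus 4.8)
The plan is to run the classical Poho\u{z}aev scheme, i.e. to test the equation against the dilation multiplier $x\cdot\nabla u$, but to exploit at one decisive point that $\z$ is the \emph{unit} field aligned with $\nabla u$. Since $-\d\,\z=f(u)$, I would start from
\[
\io f(u)\,(x\cdot\nabla u)=-\io (\d\,\z)\,(x\cdot\nabla u).
\]
The left-hand side is handled by the chain rule $f(u)\nabla u=\nabla\!\big(F(u)\big)$, so that $f(u)(x\cdot\nabla u)=x\cdot\nabla\!\big(F(u)\big)$, and the divergence theorem (with $\d\,x=N$) gives $\io f(u)(x\cdot\nabla u)=-N\io F(u)+\ido F(u)\,x\cdot\nu$. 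The boundary term does not drop out, precisely because for the $1$--Laplacian the trace of $u$ need not vanish.

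For the right-hand side I would integrate by parts and expand $\nabla(x\cdot\nabla u)=\nabla u+(x\cdot\nabla)\nabla u$, obtaining
\[
-\io (\d\,\z)(x\cdot\nabla u)=\io \z\cdot\nabla u+\io \z\cdot(x\cdot\nabla)\nabla u-\ido (\z\cdot\nu)(x\cdot\nabla u).
\]
The first volume term is immediate from (2) in Definition \ref{definition}: $\z\cdot\nabla u=|\nabla u|$ a.e., so $\io\z\cdot\nabla u=\io|\nabla u|$. The heart of the matter is the second-order term $A:=\io\z\cdot(x\cdot\nabla)\nabla u$. Here the pure divergence structure is not enough -- testing with $x\cdot\nabla u$ merely reproduces the weak formulation and yields a tautology -- so I would use that $(\z,Du)=|Du|$ forces $\z=\nabla u/|\nabla u|$ on $\{\nabla u\neq0\}$, whence, pointwise there,
\[
\z\cdot(x\cdot\nabla)\nabla u=\frac{\nabla u\cdot(x\cdot\nabla)\nabla u}{|\nabla u|}=\frac{(x\cdot\nabla)|\nabla u|^2}{2|\nabla u|}=(x\cdot\nabla)|\nabla u|,
\]
while both sides vanish a.e. on $\{\nabla u=0\}$. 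One integration by parts then gives $A=-N\io|\nabla u|+\ido (x\cdot\nu)|\nabla u|$, so the whole right-hand side equals $(1-N)\io|\nabla u|+\ido(x\cdot\nu)|\nabla u|-\ido(\z\cdot\nu)(x\cdot\nabla u)$.

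It remains to remove the volume term $\io|\nabla u|$, which I would do by using the solution against itself: the weak formulation (Green's formula \eqref{Green-2} with $(\z,Du)=|Du|$) with test function $u$ gives $\io|\nabla u|-\io uf(u)=\ido u\,[\z,\nu]$, and the boundary condition $[\z,\nu]\in{\rm sign}(-u)$ collapses the right member to $-\ido|u|$; hence $\io|\nabla u|=\io uf(u)-\ido|u|$. Substituting this into the two previous computations, equating them, and using $[\z,\nu]=\z\cdot\nu$ from \eqref{strip} (legitimate since $\z\in\mathcal{C}^1(\overline\Omega_\delta)$), a rearrangement produces exactly \eqref{pacosaez}.

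The main obstacle is not the algebra but its justification at the second-order step. One must check that the hypothesis $x\cdot\nabla u\in W^{1,1}(\Omega)$ indeed makes $(x\cdot\nabla)\nabla u=\nabla(x\cdot\nabla u)-\nabla u\in L^1$, so that $A$ is a bona fide $L^\infty\times L^1$ pairing; that the chain-rule identity for $|\nabla u|$ holds almost everywhere, with the degenerate set $\{\nabla u=0\}$ under control; and that every boundary integral (the traces of $F(u)$, of $|\nabla u|$, and of $x\cdot\nabla u$, paired with $x\cdot\nu$ or $\z\cdot\nu$) is meaningful -- this is exactly where the assumptions $x\cdot\nabla u\in W^{1,1}(\Omega)$ and $\z\in\mathcal{C}^1(\overline\Omega_\delta)$ are spent. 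I expect the cleanest rigorous route is to perform these steps on a smooth approximation of $u$, or first on a collar neighborhood of $\partial\Omega$ where $\z$ is $\mathcal{C}^1$, and then pass to the limit.
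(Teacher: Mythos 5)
Your proposal is correct and follows essentially the same route as the paper: test the equation with the dilation multiplier $x\cdot\nabla u$, compute $\io f(u)(x\cdot\nabla u)$ through $F$ and the divergence theorem, use $\z=\nabla u/|\nabla u|$ on $\{\nabla u\neq 0\}$ (forced by $(\z,Du)=|Du|$) to turn the second-order term into $x\cdot\nabla|\nabla u|$ and integrate it by parts, then test with $u$ and invoke $[\z,\nu]\in{\rm sign}(-u)$ to eliminate $\io|\nabla u|$. The single point you flag but leave open --- why $\z\cdot(D^2u\cdot x)$ vanishes a.e.\ on the degenerate set $\{\nabla u=0\}$ --- is resolved in the paper by Stampacchia's theorem applied to $x\cdot\nabla u\in W^{1,1}(\Omega)$: since $\nabla(x\cdot\nabla u)=0$ a.e.\ on $\{x\cdot\nabla u=0\}\supseteq\{\nabla u=0\}$, one gets $D^2u\cdot x=-\nabla u=0$ a.e.\ there.
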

\begin{proof}

By our assumption $x \cdot \nabla u\in W^{1,1}(\Omega)$, we have
\[
\nabla \left(x \cdot \nabla u   \right) = \nabla u + D^2u\cdot x,
\]
where $\left(D^2u\cdot x\right)_j =\sum_{i=1}^N \frac{\partial^2u}{\partial x_i \partial x_j}x_i$ ($j=1,\ldots , N$)
 belong to $L^1(\Omega)$. Moreover, by Stampacchia's Theorem, $\nabla \left(x \cdot \nabla u   \right)=0$ a.e. in the set $\{ x \cdot \nabla u=0 \}$ which implies
 \[
 \left(D^2u\cdot x\right)_j =0, \quad \hbox{ a.e. in } \{  |\nabla u|=0 \}.
 \]


Hence, integrating by parts and taking into account \eqref{strip},  we obtain
\begin{multline}\label{uno}
\io \d \, \z\, (x\cdot \nabla u)= \ido \left(x \cdot \nabla u\right)\left(\z \cdot \nu \right)-\io \z  \cdot \, \nabla(x\cdot \nabla u)
\\
=\ido \left(x \cdot \nabla u\right)\left(\z \cdot \nu \right)-\io |\nabla u|-\io \left(D^2u\cdot x  \right)\cdot \z\, ,
\end{multline}
 On the other hand,  we also get
\begin{multline}\label{dos}
N\io |\nabla u|=\ido |\nabla u | x\cdot \nu- \io x\cdot \nabla (|\nabla u|)
\\
=\ido |\nabla u | x\cdot \nu-\io \left(D^2u\cdot x  \right)\cdot \z \,,
\end{multline}
where in the last integral term we replace $\frac{\nabla u}{|\nabla u|}$  by $\z$ since we can assume that $|\nabla u|>0$.
Then, combining \eqref{uno} and \eqref{dos}, we obtain
\begin{multline}\label{poho1}
\io \d \, \z\, (x\cdot \nabla u)= \\
\ido \left(x \cdot \nabla u\right)\left(\z \cdot \nu \right)+(N-1)\io |\nabla u|-\ido |\nabla u | x\cdot \nu.
\end{multline}
Since $u$ is a solution, we can choose $x\cdot \nabla u\in W^{1,1}(\Omega)$ as a test function and by using integration by parts we get
\begin{align*}
\io \d \, \z\, (x\cdot \nabla u) &=-\io f(u)(x \cdot \nabla u)
\\
&=-\sum_i \io x_i \frac{\partial F(u)}{\partial x_i}
\\
& =-\ido F(u) x\cdot \nu \,+N\io F(u).
\end{align*}
Also, taking $u$ as a test function we have
\begin{align*}
\io |\nabla u|=\io uf(u)+\ido u\left(\z \cdot \nu  \right)\,.
\end{align*}
Replacing the above two equalities in \eqref{poho1} and remembering that $u\left(\z \cdot \nu  \right)=-|u|$, it yields the equality \eqref{pacosaez}. Finally, we point out that in case $|\nabla u|=0$ in the whole $\Omega$, we obtain the identity
\[
(N-1)\io uf(u)-N\io F(u)+\ido F(u) x\cdot \nu =(N-1)\ido |u|
\]

\end{proof}

\begin{corollary}In case $\Omega=B_R$ (the ball of radius $R>0$). Under the hypotheses of Proposition \ref{Pohozaev},  solutions of \eqref{sense} must satisfy the inequality
\[
(N-1)\int_{B_R}uf(u)-N\int_{B_R}F(u)+R\int_{\partial B_R}F(u) \geq (N-1)\int_{\partial B_R} |u|.
\]
\end{corollary}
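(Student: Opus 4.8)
The plan is to specialize the Poho\u{z}aev identity \eqref{pacosaez} to the ball and then discard a manifestly nonnegative boundary term. The only geometric input needed is that on $\Omega=B_R$ the outer unit normal at a point $x\in\partial B_R$ is $\nu=x/R$, so that $x\cdot\nu=|x|^2/R=R$ identically on $\partial B_R$.

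First I would substitute $x\cdot\nu=R$ into \eqref{pacosaez}. The boundary term $\ido F(u)\,x\cdot\nu$ on the left becomes $R\int_{\partial B_R}F(u)$, so the whole left-hand side turns into exactly the quantity $(N-1)\int_{B_R}uf(u)-N\int_{B_R}F(u)+R\int_{\partial B_R}F(u)$ appearing in the statement; likewise the term $\ido|\nabla u|\,x\cdot\nu$ on the right becomes $R\int_{\partial B_R}|\nabla u|$. Thus the identity reads
\[
(N-1)\int_{B_R}uf(u)-N\int_{B_R}F(u)+R\int_{\partial B_R}F(u)
= R\int_{\partial B_R}|\nabla u| - \int_{\partial B_R}(x\cdot\nabla u)(\z\cdot\nu) + (N-1)\int_{\partial B_R}|u|.
\]
Hence the asserted inequality is equivalent to the single estimate
\[
R\int_{\partial B_R}|\nabla u| - \int_{\partial B_R}(x\cdot\nabla u)(\z\cdot\nu)\ge 0.
\]

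To establish this I would bound the integrand pointwise. Using $x=R\nu$ on $\partial B_R$ together with the Cauchy--Schwarz inequality and $|\nu|=1$ gives $|x\cdot\nabla u|=R|\nu\cdot\nabla u|\le R|\nabla u|$, while the constraint $\|\z\|_\infty\le 1$ yields $|\z\cdot\nu|\le 1$. Multiplying,
\[
(x\cdot\nabla u)(\z\cdot\nu)\le |x\cdot\nabla u|\,|\z\cdot\nu|\le R|\nabla u|,
\]
so the integrand $R|\nabla u|-(x\cdot\nabla u)(\z\cdot\nu)$ is nonnegative $\mathcal{H}^{N-1}$--a.e.\ on $\partial B_R$, and integrating proves the displayed estimate.

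I do not anticipate a genuine obstacle here: the argument is a direct consequence of the identity and of the pointwise bounds $|x|=R$, $|\nu|=1$, $\|\z\|_\infty\le 1$. The only point that merits a word of care is the interpretation of $\z\cdot\nu$ on the boundary as the weak normal trace $[\z,\nu]$; this is harmless under the proposition's standing assumption $\z\in\mathcal{C}^1(\overline\Omega_\delta)$, by which \eqref{strip} makes the two coincide. One may also note that equality holds precisely when $\nabla u$ is radial and aligned with $\z$ on $\partial B_R$, which is the behaviour expected for radial solutions, but this refinement is not needed for the stated inequality.
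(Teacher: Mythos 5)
Your proposal is correct and takes essentially the same route as the paper: both substitute $x\cdot\nu=R$ into \eqref{pacosaez} and reduce the claim to the nonnegativity of $R\int_{\partial B_R}|\nabla u|-\int_{\partial B_R}\left(x\cdot\nabla u\right)\left(\z\cdot\nu\right)$, proved from $\|\z\|_\infty\le 1$ together with Cauchy--Schwarz and $|x|=R$ on $\partial B_R$. If anything, your pointwise bound $\left(x\cdot\nabla u\right)\left(\z\cdot\nu\right)\le|x\cdot\nabla u|\,|\z\cdot\nu|\le R|\nabla u|$ is slightly more careful than the paper's integrated chain, which omits the absolute value on $x\cdot\nabla u$ in an intermediate step.
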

\begin{proof}
Since $x\cdot \nu =R$ and by \eqref{Borel} , it follows that
\begin{multline*}
\int_{\partial B_R} |\nabla u| x\cdot \nu \,-\int_{\partial B_R} \left(x \cdot\nabla u\right)\left(\z \cdot \nu \right) \\
 \geq R\int_{\partial B_R} |\nabla u|\,-\|\z\|_{\infty}\int_{\partial B_R} \left(x \cdot \nabla u\right)
 \\
  \geq R\left(1- \|\z\|_{\infty} \right)  \int_{\partial B_R} |\nabla u|\geq 0.
\end{multline*}
Substituting into \eqref{pacosaez}, we obtain the desired inequality.
\end{proof}

 The following result, first obtained by F. Demengel in \cite[Section 4]{D1}, is now a consequence of Proposition \ref{Pohozaev}.
\begin{corollary}
Besides the hypotheses of Proposition \ref{Pohozaev}, assume  that $u_{|\partial \Omega}\equiv 0$. Then
\[
(N-1)\io uf(u)=N\io F(u).
\]
 In particular, for $f(s)=|s|^{q-1}s$ it follows $q=\frac{1}{N-1}$.
\end{corollary}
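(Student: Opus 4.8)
The plan is to specialize the Poho\u{z}aev identity \eqref{pacosaez} to a solution with genuine zero boundary trace and to show that every boundary integral on its right--hand side either vanishes or cancels against another. First I would record the two immediate simplifications coming from $u_{|\partial\Omega}\equiv0$: since $F(0)=0$ we have $F(u)=0$ on $\partial\Omega$, so $\ido F(u)\,x\cdot\nu=0$; and $|u|=0$ on $\partial\Omega$, so $(N-1)\ido|u|=0$. Thus \eqref{pacosaez} collapses to
\[
(N-1)\io uf(u)-N\io F(u)=\ido|\nabla u|\,x\cdot\nu-\ido\left(x\cdot\nabla u\right)\left(\z\cdot\nu\right),
\]
and the whole matter reduces to showing that the remaining two boundary terms coincide.

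The key step is the observation that, because $u$ vanishes identically on $\partial\Omega$, its tangential derivative along the boundary is zero, so the trace of $\nabla u$ is purely normal, $\nabla u=\frac{\partial u}{\partial\nu}\,\nu$ on $\partial\Omega$. This gives $x\cdot\nabla u=(x\cdot\nu)\frac{\partial u}{\partial\nu}$ and $|\nabla u|=|\frac{\partial u}{\partial\nu}|$ on $\partial\Omega$. Next I would use that, on the strip $\Omega_\delta$, condition (2) of Definition \ref{definition} together with $\|\z\|_\infty\le1$ forces $\z=\nabla u/|\nabla u|$ wherever $|\nabla u|>0$; since $\z\in\mathcal{C}^1(\overline\Omega_\delta)$, formula \eqref{strip} then yields $\z\cdot\nu={\rm sign}\left(\frac{\partial u}{\partial\nu}\right)$ on $\partial\Omega$. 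Combining these facts,
\[
\left(x\cdot\nabla u\right)\left(\z\cdot\nu\right)=(x\cdot\nu)\frac{\partial u}{\partial\nu}\,{\rm sign}\left(\frac{\partial u}{\partial\nu}\right)=(x\cdot\nu)\left|\frac{\partial u}{\partial\nu}\right|=(x\cdot\nu)|\nabla u|,
\]
so the two boundary terms are equal and their difference vanishes. Hence $(N-1)\io uf(u)=N\io F(u)$, as claimed.

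For the final assertion I would simply substitute $f(s)=|s|^{q-1}s$, for which $uf(u)=|u|^{q+1}$ and $F(u)=\frac{|u|^{q+1}}{q+1}$. The identity becomes $(N-1)\io|u|^{q+1}=\frac{N}{q+1}\io|u|^{q+1}$; since the solution is nontrivial, $\io|u|^{q+1}>0$ may be cancelled, leaving $(N-1)(q+1)=N$, that is $q=\frac1{N-1}$.

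I expect the main obstacle to be the rigorous justification that the trace of $\nabla u$ is normal on $\partial\Omega$. In the $W^{1,1}$ framework this statement about traces must be handled with care, and it is precisely here that the strengthened hypothesis $u_{|\partial\Omega}\equiv0$ is used, as opposed to the weak boundary condition $[\z,\nu]\in{\rm sign}(-u)$, which by itself permits a nonzero trace of $u$. The regularity assumptions of Proposition \ref{Pohozaev}, namely $x\cdot\nabla u\in W^{1,1}(\Omega)$ and $\z\in\mathcal{C}^1(\overline\Omega_\delta)$, are exactly what give these boundary traces a classical meaning and allow the pointwise identification $\z=\nabla u/|\nabla u|$ near $\partial\Omega$.
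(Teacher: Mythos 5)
Your proposal is correct and follows the route the paper intends: the paper states this corollary without an explicit proof (merely ``a consequence of Proposition \ref{Pohozaev}''), and your argument supplies exactly the missing content --- the terms $\ido F(u)\,x\cdot\nu$ and $(N-1)\ido|u|$ vanish since $u_{|\partial\Omega}\equiv 0$, while the two remaining boundary terms cancel because the trace of $\nabla u$ is purely normal and $(\z,Du)=|Du|$ forces $(\z\cdot\nu)\,\partial u/\partial\nu=|\partial u/\partial\nu|$ on $\partial\Omega$. Your handling of the trace identification is at the same (somewhat formal) level of rigor as the paper's own proof of Proposition \ref{Pohozaev}, and the final exponent computation, including the implicit nontriviality of $u$ needed to cancel $\io|u|^{q+1}$, is correct.
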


  It is worth noting that in the Poho\u{z}aev inequalities, there is no restriction on the possible values of $q$. We give some explicit examples about radial solutions of problem \eqref{problem} in the ball $B_R=\{x\in \RR^N\>:\>|x|<R\}$. We point out  that they also satisfy the Poho\u{z}aev identity \eqref{pacosaez}.

  \begin{example}
  For $f(s)=|s|^{q-1}s$, with $q>0$
\[
u(x)\equiv \left(\frac{N}{R} \right)^{1/q}, \quad \z(x)=-\frac{x}{R},
\]
defines a positive constant solution, while a negative solution is defined by
 \[
u(x)\equiv -\left(\frac{N}{R} \right)^{1/q}, \quad \z(x)=\frac{x}{R}.
\]
Furthermore thanks to Proposition \ref{Pohozaev}, for a general continuous and increasing function $f$, constant solutions of \eqref{sense} in $B_R$ must satisfy
\[
u\equiv f^{-1}\left(\frac{N}{R} \right).
\]
  \end{example}

  In the next examples, we assume a supercritical growth, so that in the supercritical case, two positive (and two negative) solutions are obtained. A further remark is in order. We have considered the Anzellotti theory of pairing gradients of $BV$--functions and bounded vector fields whose divergence is an $L^N$--function. It should be remarked that analogous results hold for bounded vector fields whose divergence is a function belonging to the Marcinkiewicz space $L^{N,\infty}(\Omega)$. This fact is a consequence of the continuous embedding of $BV(\Omega) \hookrightarrow L^{\frac N{N-1},1}(\Omega)$,
  where  $L^{\frac N{N-1},1}(\Omega)$ denotes the Lorentz space (see \cite{Al}). Hence, the Radon measure $(\z, Du)$ is well--defined for the vector field $\z(x)=\frac{x}{|x|}$, whose distributional divergence is given by $\d\z(x)=\frac{N-1}{|x|}$ and belongs to $L^{N,\infty}(B_R)$, and for any $u\in BV(B_R)$.

\begin{example}
\
\begin{enumerate}
\item  For $f(s)=s_+^q$ with $q>\frac{1}{N-1}$
\[
u(x)=\left(\frac{N-1}{|x|} \right)^{1/q}, \quad \z(x)=-\frac{x}{|x|},
\]
is a positive solution in $W^{1,1}(B_R)$.
\item For $f(s)=\left(\left(\frac{N-1}{R} \right)^{1/q}+s \right)_+^q$ with $q>\frac{1}{N-1}$
\[
u(x)=\left(\frac{N-1}{|x|} \right)^{1/q}-\left(\frac{N-1}{R} \right)^{1/q}, \quad \z(x)=-\frac{x}{|x|},
\]
is a positive solution belongs to $W_0^{1,1}(B_R)$.
\end{enumerate}
\end{example}

\section*{Ackonwledgement}
This work was carried out during a stay by the first author at Universitat de Val\`encia (Spain) partially supported by Secretar\'ia de Estado de Investigaci\'on, Desarrollo e Innovaci\'on EEBB2016 (Spain). He wants to thank for the very nice and
stimulating atmosphere found there.  The authors also want to thank Julio Rossi for inspiring discussions concerning the subject of this paper.


\begin{thebibliography}{99}



\bibitem{Al} A. Alvino, Sulla diseguaglianza di Sobolev in spazi di Lorentz.
  \emph{Boll. Un. Mat. Ital. A} (5) \textbf{14} (1977), no. 1, 148--156.

\bibitem{AA}  A. Ambrosetti and D. Arcoya, An introduction to nonlinear functional analysis and elliptic problems. \emph{Progress in Nonlinear Differential Equations and their Applications,} 82. Birkh\"auser Boston, Inc., Boston, MA, 2011. xii+199 pp.

\bibitem{AR} A. Ambrosetti and P.H. Rabinowitz, Dual variational methods in critical point theory and applications, \emph{Journal of Functional Analysis}, \textbf{14}, 349--381 (1973).

\bibitem{AFP} L. Ambrosio, N. Fusco and D. Pallara, Functions of bounded variation and free discontinuity problems. \emph{Oxford Mathematical Monographs}. The Clarendon Press, Oxford University Press, New York, 2000. xviii+434 pp. ISBN: 0--19--850245--1


\bibitem{ABCM}F. Andreu, C. Ballester, V. Caselles, and J. M. Maz\'on, The Dirichlet problem for the total variation flow. \emph{Journal of Functional Analysis}, \textbf{180}, 347--403 (2001).

    \bibitem{ACDM} F. Andreu, V. Caselles, J.I. D\'{\i}az and J.M.  Maz\'on,
Some qualitative properties for the total variation flow,
\emph{J. Funct. Anal.} {\bf  188}  (2002),  no. 2, 516--547.

\bibitem{ACM1} F. Andreu, V. Caselles and J.M. Maz\'on,
Existence and uniqueness of a solution for a parabolic quasilinear problem for linear growth functionals with $L^1$ data,
\emph{Math. Ann.} {\bf  322}  (2002),  no. 1, 139--206.

\bibitem{ACM} F. Andreu, V. Caselles, and J.M. Maz\'on, Parabolic Quasilinear Equations Minimizing Linear Growth Functionals. \emph{Progress in Mathematics}, vol. \textbf{223}, Birkhauser, (2004).

\bibitem{Anze} G. Anzellotti, Pairings Between Measures and Bounded Functions and Compensated Compactness. \emph{Ann. di Matematica Pura ed Appl.}, \textbf{ 135}  no. 1, 293--318 (1983).

    \bibitem{BCN}  G. Bellettini, V. Caselles and M. Novaga,
 The Total Variation Flow in $\RR^N$, \emph{J. Diff Equat.} {\bf
 184} (2002), 475--525.

 \bibitem{CT} M. Cicalese and C. Trombetti, Asymptotic behaviour of
 solutions to $p$--Laplacian equation, \emph{Asymptot. Anal.}
 {\bf 35} (2003), 27--40.


\bibitem{D1}  F. Demengel, On some nonlinear partial differential equations involving the 1--Laplacian and critical Sobolev exponent. \emph{ESAIM Control Optim. Calc. Var.} \textbf{4} (1999), 667--686.

\bibitem{D2}  F. Demengel, Th\'eor\`emes d'existence pour des \'equations avec l'op\'erateur ``1-Laplacien", premi\`ere valeur propre
pour $-\Delta_1$. \emph{C. R. Acad. Sci. Paris, Ser. I} \textbf{334}  (2002), 1071--1076.

  \bibitem{D3} F. Demengel, On some nonlinear equation involving the 1-Laplacian and trace map
  inequalities, \emph{Nonlinear Anal. T.M.A.} {\bf 48} (2002), 1151--1163.

\bibitem{DJM} G. Dinca, P. Jebelean and J. Mawhin, A result of Ambrosetti--Rabinowitz type for $p$--Laplacian. \emph{ Qualitative problems for differential equations and control theory}, 231--242, World Sci. Publ., River Edge, NJ, (1995).

\bibitem{Mawhin} G. Dinca, P. Jebelean and J. Mawhin, Variational and topological methods for Dirichlet problems
 with $p-$Laplacian. \emph{Portugaliae Mathematica}, vol. \textbf{58} Fasc. 3, 339--378 (2001).

 \bibitem{F} P.C. Fife, Mathematical aspects of reacting and diffusing systems. \emph{Lecture Notes in Biomathematics}, 28. Springer-Verlag, Berlin-New York, 1979. iv+185 pp.

 \bibitem{GP} N. Ghoussoub and D. Preiss, \emph{ A general mountain pass principle
for locating and classifying critical points}, Ann. Inst. H.
Poincar\'e Anal. Non Lin\'eaire \textbf{6} (1989), no. 5, 321--330.


    \bibitem{K}  B. Kawohl, \emph{ On a family of torsional creep problems,}
     J. Reine Angew. Math. \textbf{ 410} (1990), 1--22.

 \bibitem{K2} B. Kawohl,
From p-Laplace to mean curvature operator and related questions
 Progress in partial differential equations: the Metz surveys, 40--56, Pitman
 Res. Notes Math. Ser., 249, Longman Sci. Tech., Harlow, 1991.

\bibitem{MRST} A. Mercaldo, J.D. Rossi, S. Segura de Le\'on and C. Trombetti, Behaviour of $p-$Laplacian problems with Neumann boundary conditions when $p$ goes to $1$. \emph{Commun. Pure Appl. Anal.}, vol. \textbf{12}, no. 1, 253--267 (2013).

\bibitem{Ni} Wei-Ming Ni, The mathematics of diffusion. CBMS-NSF Regional Conference Series in Applied Mathematics, 82. \emph{Society for Industrial and Applied Mathematics (SIAM)}, Philadelphia, PA, 2011. xii+110 pp.



\end{thebibliography}
\end{document}